\def\mathcal{\mathscr}
\newfont{\aaa}{cmb10 at 19pt}
\newfont{\bbb}{cmb10 at 11pt}
\newtheorem{lem}{Lemma}
\newtheorem{thm}{Theorem}
\newtheorem{rem}{Remark}
\newtheorem{pro}{Proposition}
\newcommand{\beq}{\begin{equation}}
\newcommand{\eeq}{\end{equation}}
\newcommand{\bey}{\begin{eqnarray}}
\newcommand{\eey}{\end{eqnarray}}
\newcommand{\beyy}{\begin{eqnarray*}}
\newcommand{\eeyy}{\end{eqnarray*}}
\def\@thm#1#2#3{%
  \ifhmode\unskip\unskip\par\fi
  \normalfont
  \trivlist
  \let\thmheadnl\relax
  \let\thm@swap\@gobble
  \thm@notefont{\fontseries\mddefault\upshape}%
  \thm@headpunct{}
  \thm@headsep 5\p@ plus\p@ minus\p@\relax
  \thm@space@setup
  #1
  \@topsep \thm@preskip               
  \@topsepadd \thm@postskip           
  \def\@tempa{#2}\ifx\@empty\@tempa
    \def\@tempa{\@oparg{\@begintheorem{#3}{}}[]}%
  \else
    \refstepcounter{#2}%
    \def\@tempa{\@oparg{\@begintheorem{#3}{\csname the#2\endcsname}}[]}%
  \fi
  \@tempa
}
\def\@evenhead{
\vbox{\hbox to \textwidth {}{\hspace{0mm}{\footnotesize
\thepage}}{\hspace{8cm} {\footnotesize {Naihuan JING, Rongjia
LIU}}} \protect\vspace{1truemm}\relax \hrule depth0pt
height0.15truemm width\textwidth}}
\def\@evenfoot{}
\def\@oddhead{\vbox{\hbox to \textwidth
{{\hspace{0cm}{\footnotesize A Twisted Quantum Toroidal Algebra}\hfill{\footnotesize
\thepage}}\hspace{0mm}}{} \protect\vspace{1truemm}\relax\hrule
depth0pt height0.15truemm width\textwidth}}
\def\@oddfoot{}
\begin{document}


\setcounter{page}{1}
\qquad\\[8mm]

\noindent{\aaa{A twisted quantum toroidal algebra}\\[1mm]

\noindent{\bbb Naihuan JING$^{1, 2}$ ,\quad Rongjia LIU$^1$ }\\[-1mm]
\\ \noindent\footnotesize{1~School of Sciences, South China University of Technology, Guangzhou 510640, China
\\2~Department of Mathematics, North Carolina State University, Raleigh 27695, USA}\\[6mm]

\normalsize\noindent{\bbb Abstract}\quad As an analog of the quantum TKK algebra, a twisted quantum toroidal algebra of type $A_1$ is introduced.  Explicit realization of the new quantum TKK algebra is constructed with the help of twisted quantum vertex operators
over a  Fock space.\vspace{0.3cm}

\footnotetext{\hspace*{5.8mm}Corresponding author: Rongjia LIU, E-mail: liu.rongjia@mail.scut.edu.cn}

\noindent{\bbb Keywords}\quad vertex operator, toroidal algebra, quantum algebra\\
{\bbb MSC}\quad 17B65, 17B67, 17B69\\[0.4cm]

\noindent{\bbb{1\quad Introduction}}\\[0.6cm]
The first realization of a toroidal algebra was given by Frenkel~\cite{F} as
 certain vertex representation of the affinized Kac-Moody algebra. Quantum
  toroidal algebras were introduced by Ginzburg et al.~\cite{GKV}
 in their study of the Langlands reciprocity for algebraic surfaces (see also \cite{J3}). The latter algebras are
  quantum analogue of the toroidal Lie algebras presented by Moody et al.~\cite{MRY}.
 Several interesting representations
   were found from various contexts, such as the toroidal Schur-Weyl duality
   \cite{VV},
 vertex representations 
 \cite{S}, McKay correspondence
 \cite{FJW}, toroidal actions on level one
representations \cite{STU} and higher level analogs for quantum affine algebras
\cite{TU}.
In \cite{J2}, a twisted quantum Kac-Moody algebra was introduced by generalizing the constructions in \cite{FJ,J1}.

Recently, Gao and Jing~\cite{GJ2}~introduced a quantum Tits-Kantor-Kocher (TKK) algebra using homogeneous $q$-deformed vertex operators and the construction can be viewed as
a generalization of the TKK algebra as a special unitary Lie
algebra \cite{GJ}. More recently, the general homogeneous construction has been generalized
to the twisted setting \cite{CGJT}.
This paper proposes a twisted version of the quantum TKK algebra as a twisted quantum toroidal algebra
of type $A_1$ and also constructs
its Fock space realization using the ``$-1$'' twisted vertex operators. Some new properties
of the twisted quantum algebra are proved using combinatorial tools.
In particular, a special type of Serre relations is
established for the new quantum algebra. It is hoped that the twisted quantum toroidal algebra might be useful in
 further study of quantum toroidal algebras.\\[0.1cm]

\noindent{\bbb 2\quad Twisted quantum toroidal algebra of type $A_{1}$}\\[0.6cm]
Let $\mathbb{C} ~,~\mathbb{C}^{\times},~\mathbb{Z}$ and
$\mathbb{N}$ be the field of complex numbers, the group of non-zero
complex numbers, the ring of integers, and the set of non-negative integers, respectively.

The twisted quantum toroidal algebra of type $A_{1}$ is the complex unital associative algebra generated by
\begin{equation*}
 q^{\pm\frac{c}{2}},\quad h_{im},
\quad x_{in}^{\pm}, ~m\in2\mathbb{Z}+1, ~n\in \mathbb{Z},
\quad i=0, 1,
\end{equation*}
with the following relations that $q^{\pm\frac{c}{2}}$ are central and
\begin{eqnarray}\label{R1}
[h_{im},h_{im'}]\!\!\!\!&&\!\!\!\!=\frac{[2m]}{2m}[mc]\delta_{m,-m'}, \label{R2}
\\~[h_{im},h_{jm'}]\!\!\!\!&&\!\!\!\!=\frac{(q-q^{-1})[m]^2}{2|m|}[mc]\delta_{m,-m'},  \qquad i\neq j,   \label{R3}
\\~[h_{im},x_{in}^{\pm}]\!\!\!\!&&\!\!\!\!=\mp\frac{[2m]}{m}q^{\mp|m|c/2}x_{i,m+n}^{\pm}, \label{R4}
\\~[h_{im},x_{jn}^{\pm}]\!\!\!\!&&\!\!\!\!=\mp\frac{(q-q^{-1})[m]^2}{|m|}q^{\mp |m|c/2}x_{j,m+n}^{\pm}, \quad i\neq j,
\label{R5}
\end{eqnarray}
\begin{equation}\label{R6}
[x_{im}^{+}, x_{in}^{-}]= \frac{2(q+q^{-1})}{q-q^{-1}}\Big(\phi_{i,m+n}^{+}q^{(n-m)c/2}-\phi_{i,m+n}^{-}q^{(m-n)c/2}\Big),
\end{equation}
where
\begin{equation*}
[m]=\frac{q^{m}-q^{-m}}{q-q^{-1}}, \quad[mc]=\frac{q^{mc}-q^{-mc}}{q-q^{-1}},
\end{equation*}
and
\begin{equation}
\phi_{i}^{\pm}(z)=\exp\big\{\pm(q-q^{-1})2\sum_{m>0, odd} h_{i,\pm m}z^{\mp m}\big\}=\sum_{n\geqslant 0}\phi_{i,n}^{\pm} z^{\mp n},
\end{equation}
 Let
\begin{equation*}
 x_i^{\pm}(z)=\sum_{n\in \mathbb{Z}}x_{in}^{\pm} z^{-n}.
\end{equation*}
Then the Serre relations are given as follows:
\begin{equation}\label{serre1}
(z-q^{\pm 2}w)(z+q^{\mp 2}w)x_{i}^{\pm}(z)x_{i}^{\pm}(w)=(z-q^{\mp 2}w)(z+q^{\pm 2}w)x_{i}^{\pm}(w)x_{i}^{\pm}(z),
\end{equation}
\begin{align}\label{serre2}
(z-q^{-2}w)(z+q^{2}w) & x_{i}^{\pm}(z)x_{j}^{\pm}(w)\\ \nonumber
=&(z-q^{ 2}w)(z+ q^{-2}w)x_{j}^{\pm}(w)x_{i}^{\pm}(z),~i\neq j,
\end{align}
\begin{align}\label{serre2}
(z-qw)^{2}(z+q^{-1}w)^{2} & x_{i}^{\pm}(z)x_{j}^{\mp}(w)\\ \nonumber
=&(z-q^{ -1}w)^{2}(z+ qw)^{2}x_{j}^{\mp}(w)x_{i}^{\pm}(z),~i\neq j,
\end{align}
\begin{eqnarray}
\begin{split}
Sym_{z_{1},z_{2},z_{3}}\Big\{\prod_{k<l}(z_{k}+q^{\mp 2}z_{l})(z_{l}-q^{\mp 2}z_{k})\Big(x_i ^{\pm}(z_1 )x_i ^{\pm}(z_2 )x_{i}^{\pm}(z_{3})x_j^{\pm} (w)+\\x_i ^{\pm}(z_1 )x_i ^{\pm}(z_2 )
x_j ^{\pm}(w)x_{i}^{\pm}(z_{3})+x_i ^{\pm}(z_1 )x_j^{\pm}(w)x_i ^{\pm}(z_2 )x_i ^{\pm}(z_{3})\\+x_j^{\pm}(w)x_i ^{\pm}(z_1 )x_i ^{\pm}(z_2 )x_i ^{\pm}(z_{3})\Big)\Big\}=0,\quad i\neq j.
\end{split}
\end{eqnarray}
 Let
 \begin{equation*}
 \displaystyle G_i(x)=\sum_{n=0}^{\infty}g_{in}x^n
 \end{equation*}
 be the Taylor series at $x=0$ of the following functions
 \begin{align}
 G_0(x) &=\frac{1-q^2x}{1+q^2x}\frac{1+q^{-2}x}{1-q^{-2}x},\nonumber
\\
 G_1(x)&=\frac{1-q^2x}{1+q^2x}\frac{1-q^{-2}x}{1+q^{-2}x}(\frac{1+x}{1-x})^2.\nonumber
 \end{align}
 Then relations (\ref{R1}--\ref{R6}) can also be expressed in terms of generating series:
\begin{eqnarray}
\phi_{i}^{+}(z)\phi_{j}^{-}(w)&=&\phi_{j}^{-}(w)\phi_{i}^{+}(z)\frac{G_{|i-j|}(q^{c}w/z)}{G_{|i-j|}(q^{-c}w/z)},
\end{eqnarray}
\begin{equation}
\Big[\phi_{i}^{+}(z),\phi_{j}^{+}(w)\Big]=\Big[\phi_{i}^{-}(z),\phi_{j}^{-}(w)\Big]=0,
\end{equation}
\begin{align}\label{commutator1}
\phi_i^+(z)x_j^{\pm}(w)\phi_i^+(z)^{-1}&=x_j^{\pm}(w)G_{|i-j|}(\frac{w}{z}q^{\mp c/2})^{\pm 1},\\ \label{commutator2}
\phi_i^-(z)x_j^{\pm}(w)\phi_i^-(z)^{-1}&=x_j^{\pm}(w)G_{|i-j|}(\frac{z}{w}q^{\mp c/2})^{\mp 1},
\end{align}
\begin{align}[x_{i}^{+}(z), x_{i}^{-}(w)] &=\frac{2(q+q^{-1})}{q-q^{-1}}\left\{\phi_{i}^{+}(q^{c/2}w )\delta(q^{c}\frac{w}{z})-\phi_{i}^{-}(q^{c/2}z)\delta(q^{-c}\frac{w}{z})\right\},
\end{align}
where $\delta(z)=\sum_{n\in \mathbb{Z}}z^{n}$ is the formal $\delta$-function.\\[-0.1cm]

\noindent{\bbb 3\quad Fock space and  twisted vertex operators}\\[0.5cm]
In this section, we will set up the Fock space and construct a family of vertex operators to realize the twisted quantum algebra.

Let $P=\mathbb{Z}\epsilon_{1}\oplus \mathbb{Z}\epsilon_{2}$ be a rank two free abelian group equipped with a $\mathbb{Z}$-bilinear form
$(\cdot,\cdot)$ defined by $(\epsilon_{i},\epsilon_{j})=\delta_{ij} , 1\leqslant i, j \leqslant 2$. Let Q=$ \mathbb{Z}(\epsilon_{1}-\epsilon_{2})$
be the rank one free subgroup of P.

Let $\varepsilon:Q\times Q\rightarrow\{\pm1\}$ be the bi-multiplicative function such that
\begin{equation}
\varepsilon(\alpha+\beta,\gamma)=\varepsilon(\alpha,\gamma)\varepsilon(\beta,\gamma), ~~\varepsilon(\alpha,\beta+\gamma)=\varepsilon(\alpha,\beta)\varepsilon(\alpha,\gamma),
\end{equation}
\begin{equation}\label{inner}
\varepsilon(\alpha,\alpha)=\left(-1\right)^{(\alpha,\alpha)/2},
\end{equation}
for $\alpha,\beta,\gamma\in Q$. Equation (\ref{inner}) immediately implies that
\begin{equation}
\varepsilon(\alpha,\beta)\varepsilon(\beta,\alpha)=(-1)^{(\alpha,\beta)} , \quad\alpha,\beta\in Q.
\end{equation}

Let
\\\hspace*{50mm } $\mathbb{C}[Q]=\bigoplus_{n\in\mathbb Z} \mathbb{C}e^{n\alpha}$
\\be the group algebra of Q. Also, for $\beta \in H= Q\otimes_{\mathbb{Z}} \mathbb{C}$, define $\beta(0)\in$End$\mathbb{C}[Q]$ by
\\ \hspace*{30mm }  $\beta(0)e^{\alpha}=(\beta, \alpha)e^{\alpha},\quad\alpha \in Q,$，\\and let $\epsilon_{i}(n)$ be a linear copy of
$\epsilon_{i}$ for each $n\in 2\mathbb{Z}+1 ~and ~i=1,2$.

Let $\mathcal{H}$ be the Heisenberg  Lie algebra generated by
$\epsilon_{i}(n)$ and $1$ ($n\in2\mathbb{Z}+1, i=1,2)$ subjects to the following relations:
\begin{equation}
[\epsilon_{i}(m),\epsilon_{j}(n)]=\frac{m}{2}(\epsilon_{i},\epsilon_{j})\delta_{m,-n}.
\end{equation}

Let
\\ \hspace*{30mm }$S(\mathcal{H^{-}})=\mathbb{C}\left[\epsilon_{i}(n):1 \leqslant i \leqslant2, n\in -(2\mathbb{N}+1)\right]$
\\ denote the symmetric algebra of $\epsilon_{i}(n),  1 \leqslant i \leqslant2,  n \in -(2\mathbb{N}+1)$. Then $S(\mathcal{H^{-}})$ is an
$\mathcal{H}$-module under the action that
$\epsilon_{i}(n)$ acts as a differential operator for $n \in 2\mathbb{N}+1$,
and $\epsilon_{i}(n)$ acts as a multiplication operator for $n \in -(2\mathbb{N}+1)$.

We define the Fock space be
\begin{equation}
V_{Q}=S(\mathcal{H}^{-})\otimes \mathbb{C}[Q].
\end{equation}
The operator  $z^{\alpha}\in$(End$\mathbb{C}[Q])[z,z^{-1}]$ is defined by
\begin{equation}
z^{\alpha}e^{\beta}=z^{(\alpha ,\beta)}e^{\beta}, \quad\alpha, \beta \in Q.
\end{equation}

Let $\mu$ be any non-zero complex number. Consider the valuation $\mu^{\alpha}$ of the operator $z^{\alpha}$.
Namely, $\mu^{\alpha}$  is the operator $\mathbb{C}[Q]\rightarrow \mathbb{C}[Q]$ given by
\begin{equation}
\mu^{\alpha}e^{\beta}=\mu^{(\alpha ,\beta)}e^{\beta},\quad\alpha, \beta \in Q.
\end{equation}
For convenience we set
\begin{equation}
\epsilon_{i+2}=\epsilon_{i}, \quad i\in \mathbb{Z}.
\end{equation}
Accordingly,
\begin{equation}
(\epsilon_{i},\epsilon_{j}) = \delta_{i,j} = \delta_{\overline{i},\overline{j}}, for ~\overline{i},\overline{j}\in \mathbb{Z}/2\mathbb{Z}.
\end{equation}

For $\alpha\in \{\pm\epsilon_{1}, \pm\epsilon_{2}\}$ , we introduce the operators $E_{\pm}(\alpha,z)$ as follows:
\begin{equation}
E_{\pm}(\alpha,z)=\exp\Big\{-2\sum_{n\in\pm(2\mathbb{N}+1)}\frac{\alpha(n)}{n}z^{-n}\Big\}.
\end{equation}

It follows that for $\alpha, \beta\in \{\pm\epsilon_{1}, \pm\epsilon_{2}\}$
\begin{equation}\label{eq-OPE}
E_{+}(\alpha,z)E_{-}(\beta,w)=E_{-}(\beta,w)E_{+}(\alpha,z)\left(\frac{1-\frac{w}{z}}{1+\frac{w}{z}}\right)^{(\alpha,\beta)}.
\end{equation}

Let $a$ be a non-zero complex number,
we define the following vertex operators $X_{i,j}(a,z)$, which acts on the Fock space
$V_{Q}$:
\begin{eqnarray}
\nonumber
X_{i,j}(a,z)\!\!\!\!&=&\!\!\!\!e^{\epsilon_{i}-\epsilon_{j}}:\exp\Big\{-2\sum_{n\in\pm (2\mathbb{N}+1)}\frac{\epsilon_{i}(n)-a^{-n}q^{(i-j)|n|}\epsilon_{j}(n)}{n}z^{-n}\Big\}:
\\\!\!\!\!&=&\!\!\!\!e^{\epsilon_{i}-\epsilon_{j}}E_{-}(\epsilon_{i},z)E_{-}(-\epsilon_{j},aq^{i-j}z)E_{+}(\epsilon_{i},z)
E_{+}(-\epsilon_{j},aq^{j-i}z).
\end{eqnarray}
where : : denotes the normal ordering operator in vertex operator calculus, which moves the annihilation
 operators $\epsilon_{j}(n)$ to the right of other creation operators.
\begin{rem}
In \cite{CGJT}, the classical case (i.e., q=1) has been discussed.
\end{rem}

For $a \in \mathbb{C}^{\times}, and ~i\neq j$, we define
\begin{small}
\begin{eqnarray}
u_{i,j}(a,z)=\!\!\!\!&-&\!\!\!\!\exp\Big\{2\sum_{n>0,odd}\frac{q^{(j-i)n}-q^{(i-j)n}}{n}\nonumber
\\\!\!\!\!&\cdot&\!\!\!\!\big(q^{(j-i)n/2}\epsilon_{i}(n)-a^{-n}q^
{(i-j)n/2}\epsilon_{j}(n)\big)z^{-n}\Big\},
\end{eqnarray}
\end{small}
\begin{small}
\begin{eqnarray}
v_{i,j}(a,z)=\!\!\!\!&-&\!\!\!\!\exp\Big\{2\sum_{n>0,odd}\frac{q^{(i-j)n}-q^{(j-i)n}}{n}\nonumber
\\\!\!\!\!&\cdot&\!\!\!\!\big(q^{(j-i)n/2}\epsilon_{i}(-n)-a^{-n}q^
{(i-j)n/2}\epsilon_{j}(-n)\big)z^{n}\Big\}.
\end{eqnarray}
\end{small}
\\Write
\begin{eqnarray}
X_{ij}(a,z)\!\!\!\!&=&\!\!\!\!\sum_{n\in \mathbb{Z}}X_{ij}(a,n)z^{-n},
\\u_{ij}(a,z)\!\!\!\!&=&\!\!\!\!\sum_{n=0}^{\infty}u_{ij}(a,n)z^{-n},
\\v_{ij}(a,z)\!\!\!\!&=&\!\!\!\!\sum_{n=0}^{\infty}v_{ij}(a,n)z^{n}.
\end{eqnarray}

We define

\qquad$:X_{ij}(a_{1},z_{1})X_{kl}(a_{2},z_{2}):$
\begin{eqnarray}\label{eq318}
\nonumber
=\!\!\!\!&&\!\!\!\!\varepsilon(\epsilon_{i}-\epsilon_{j},\epsilon_{k}-\epsilon_{l})e^{\epsilon_{i}-\epsilon_{j}+\epsilon_{k}-\epsilon_{l}}
\\\!\!\!\!&\cdot&\!\!\!\!E_{-}(\epsilon_{i},z_{1})E_{-}(-\epsilon_{j},a_{1}q^{i-j}z_{1})E_{-}(\epsilon_{k},z_{2})
E_{-}(-\epsilon_{l},a_{2}q^{k-l}z_{2})
\\\!\!\!\!&\cdot&\!\!\!\!E_{+}(\epsilon_{i},z_{1})E_{+}(-\epsilon_{j},a_{1}q^{j-i}z_{1})E_{+}(\epsilon_{k},z_{2})
E_{+}(-\epsilon_{l},a_{2}q^{l-k}z_{2}).
\nonumber
\end{eqnarray}

It can be checked easily that
\begin{equation*}
:X_{ij}(a,z)X_{ji}(b,w):=:X_{ji}(b,w)X_{ij}(a,z):
\end{equation*}
In fact, we have the following result.

\begin{pro}\label{eq:OPE} For $a_{1},a_{2}\in \mathbb{C}^{\times}, i,j,k,l\in \mathbb{Z}_{2}$, we have
\begin{eqnarray*}
X_{ij}\!\!\!\!\!&(&\!\!\!\!\!\!a_{1},z_{1})X_{kl}(a_{2},z_{2})
\\=\!\!\!\!&:&\!\!\!\!X_{ij}(a_{1},z_{1})X_{kl}(a_{2},z_{2}) : \Big(\frac{1-\frac{z_{2}}{z_{1}}}{1+\frac{z_{2}}{z_{1}}}\Big)^{\delta_{ik}}
\Big(\frac{1-\frac{a_{2}q^{k-l}z_{2}}{z_{1}}}
{1+\frac{a_{2}q^{k-l}z_{2}}{z_{1}}}\Big)^{-\delta_{il}}
\\\!\!\!\!&\cdot&\!\!\!\!\Big(\frac{1-\frac{a_{1}^{-1}a_{2}q^{k+i-2j}z_{2}}{z_{1}}}{1+\frac{a_{1}^{-1}a_{2}q^{k+i-2j}z_{2}}{z_{1}}}\Big)^{\delta_{jl}}
\Big(\frac{1-\frac{a_{1}^{-1}q^{i-j}z_{2}}{z_{1}}}{1+\frac{a_{1}^{-1}q^{i-j}z_{2}}{z_{1}}}\Big)^{-\delta_{jk}}.
\end{eqnarray*}
\end{pro}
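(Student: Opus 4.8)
The plan is to prove Proposition~\ref{eq:OPE} by a direct normal-ordering (Wick-type) computation whose only substantive ingredient is the elementary operator product expansion (\ref{eq-OPE}) for a single pair $E_{+}(\alpha,z)E_{-}(\beta,w)$.

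First I would sort the eight exponential factors and two group-algebra elements occurring in $X_{ij}(a_{1},z_{1})X_{kl}(a_{2},z_{2})$ into the commuting blocks to which they belong. The elements $e^{\epsilon_{i}-\epsilon_{j}}$, $e^{\epsilon_{k}-\epsilon_{l}}$ act on $\mathbb{C}[Q]$ and commute with every $E_{\pm}$, so I may pull $e^{\epsilon_{k}-\epsilon_{l}}$ to the far left, using $e^{\epsilon_{i}-\epsilon_{j}}e^{\epsilon_{k}-\epsilon_{l}}=\varepsilon(\epsilon_{i}-\epsilon_{j},\epsilon_{k}-\epsilon_{l})\,e^{\epsilon_{i}-\epsilon_{j}+\epsilon_{k}-\epsilon_{l}}$. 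Among the Heisenberg factors, any two $E_{+}$'s commute (their exponents involve only positive modes, for which $[\alpha(m),\beta(n)]=0$), and likewise any two $E_{-}$'s commute; hence bringing the product into normal order reduces to sliding the two ``annihilation'' factors $E_{+}(\epsilon_{i},z_{1})$ and $E_{+}(-\epsilon_{j},a_{1}q^{j-i}z_{1})$ of the first operator rightward past the two ``creation'' factors $E_{-}(\epsilon_{k},z_{2})$ and $E_{-}(-\epsilon_{l},a_{2}q^{k-l}z_{2})$ of the second.

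Next I would evaluate the four resulting transpositions, each by a single application of (\ref{eq-OPE}). The relevant bilinear pairings are $(\epsilon_{i},\epsilon_{k})=\delta_{ik}$, $(\epsilon_{i},-\epsilon_{l})=-\delta_{il}$, $(-\epsilon_{j},\epsilon_{k})=-\delta_{jk}$ and $(-\epsilon_{j},-\epsilon_{l})=\delta_{jl}$, while the ratios $w/z$ of the corresponding arguments are $z_{2}/z_{1}$, $a_{2}q^{k-l}z_{2}/z_{1}$, $a_{1}^{-1}q^{i-j}z_{2}/z_{1}$ and $a_{1}^{-1}a_{2}q^{\,k-l-j+i}z_{2}/z_{1}$ respectively; in the last factor the $q$-exponent simplifies to $k+i-2j$ on the support of $\delta_{jl}$. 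Collecting these four scalar series yields exactly the four rational factors displayed in the Proposition (their order being immaterial since scalars commute), and the operator left over is, by the definition (\ref{eq318}), precisely $:X_{ij}(a_{1},z_{1})X_{kl}(a_{2},z_{2}):$. All identities are understood as equalities of formal power series in $z_{2}/z_{1}$, i.e.\ the OPE factors are expanded for $|z_{2}|<|z_{1}|$; since every pairing $(\alpha,\beta)$ lies in $\{0,\pm1\}$, each factor $\big(\tfrac{1-x}{1+x}\big)^{\pm1}$ is a genuine element of $\mathbb{C}[[x]]$, so no expansion ambiguity occurs.

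The step most likely to cause trouble is purely clerical: tracking the asymmetric $q$-shifts. The first vertex operator carries the shift $q^{i-j}$ inside its $E_{-}$ arguments but $q^{j-i}$ inside its $E_{+}$ arguments (and symmetrically $q^{k-l}$ versus $q^{l-k}$ for the second), and only the $E_{+}$-arguments of the first operator paired against the $E_{-}$-arguments of the second enter the final answer — the $E_{-}$-arguments of the first and the $E_{+}$-arguments of the second never contribute, those pairs already being in normal order. Getting the exponents $i-j$, $k-l$ and $k+i-2j$ (rather than their negatives or other combinations) correct is where an error would most plausibly creep in.
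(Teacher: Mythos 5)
Your proposal is correct and follows essentially the same route as the paper: both normal-order the product by applying the single-pair contraction formula (\ref{eq-OPE}) to the four crossings of the $E_{+}$ factors of the first operator past the $E_{-}$ factors of the second, absorb the cocycle $\varepsilon(\epsilon_{i}-\epsilon_{j},\epsilon_{k}-\epsilon_{l})$ into the definition (\ref{eq318}) of the normal-ordered product, and track the pairings $\delta_{ik},-\delta_{il},\delta_{jl},-\delta_{jk}$ together with the argument ratios. Your bookkeeping of the $q$-shifts, including the reduction of $k-l-j+i$ to $k+i-2j$ on the support of $\delta_{jl}$, matches the paper's computation exactly.
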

\begin{proof}[Proof \nopunct] Using the operator product expansion (26), we get\begin{eqnarray*}
X_{ij}(\!\!\!\!\!&&\!\!\!\!\!\!\!a_{1},z_{1})X_{kl}(a_{2},z_{2})
\\=\!\!\!\!\!&&\!\!\!\!\!\!e^{\epsilon_{i}-\epsilon_{j}}E_{-}(\epsilon_{i},z_{1})E_{-}(-\epsilon_{j},a_{1}q^{i-j}z_{1})E_{+}(\epsilon_{i},z_{1})
E_{+}(-\epsilon_{j},a_{1}q^{j-i}z_{1})
\\\!\!\!\!&\cdot&\!\!\!\!e^{\epsilon_{k}-\epsilon_{l}}E_{-}(\epsilon_{k},z_{2})E_{-}(-\epsilon_{l},a_{2}q^{k-l}z_{2})E_{+}(\epsilon_{k},z_{2})
E_{+}(-\epsilon_{l},a_{2}q^{l-k}z_{2})
\\=\!\!\!\!&&\!\!\!\!\!\!\!\varepsilon(\epsilon_{i}-\epsilon_{j},\epsilon_{k}-\epsilon_{l})e^{\epsilon_{i}-\epsilon_{j}+\epsilon_{k}-\epsilon_{l}}E_{-}(\epsilon_{i},z_{1})E_{-}(-\epsilon_{j},a_{1}q^{i-j}z_{1})E_{-}(\epsilon_{k},z_{2})
\\\!\!\!\!&\cdot&\!\!\!\!\!E_{-}(-\epsilon_{l},a_{2}q^{k-l}z_{2})E_{+}(\epsilon_{i},z_{1})E_{+}(-\epsilon_{j},a_{1}q^{j-i}z_{1})
\\\!\!\!\!&&\!\!\!\!\!\cdot E_{+}(\epsilon_{k},z_{2})E_{+}(-\epsilon_{l},a_{2}q^{l-k}z_{2})\left(\frac{1-\frac{z_{2}}{z_{1}}}{1+\frac{z_{2}}{z_{1}}}\right)^{\delta_{ik}}
\\\!\!\!\!&\cdot&\!\!\!\!\!\left(\frac{1-\frac{a_{1}^{-1}a_{2}q^{k+i-2j}z_{2}}{z_{1}}}
{1+\frac{a_{1}^{-1}a_{2}q^{k+i-2j}z_{2}}{z_{1}}}\right)^{\delta_{jl}}\left(\frac{1-\frac{a_{2}q^{k-l}z_{2}}{z_{1}}}{1+\frac{a_{2}q^{k-l}z_{2}}{z_{1}}}\right)^{-\delta_{il}}
\left(\frac{1-\frac{a_{1}^{-1}q^{i-j}z_{2}}{z_{1}}}{1+\frac{a_{1}^{-1}q^{i-j}z_{2}}{z_{1}}}\right)^{-\delta_{jk}}
\\=\!\!\!\!\!&:&\!\!\!\!\!X_{ij}(a_{1},z_{1})X_{kl}(a_{2},z_{2}): \left(\frac{1-\frac{z_{2}}{z_{1}}}{1+\frac{z_{2}}{z_{1}}}\right)^{\delta_{ik}}\left(\frac{1-\frac{a_{2}q^{k-l}z_{2}}{z_{1}}}{1+\frac{a_{2}q^{k-l}z_{2}}{z_{1}}}\right)^{-\delta_{il}}
\\\!\!\!\!&\cdot&\!\!\!\!\!\left(\frac{1-\frac{a_{1}^{-1}a_{2}q^{k+i-2j}z_{2}}{z_{1}}}
{1+\frac{a_{1}^{-1}a_{2}q^{k+i-2j}z_{2}}{z_{1}}}\right)^{\delta_{jl}}
\left(\frac{1-\frac{a_{1}^{-1}q^{i-j}z_{2}}{z_{1}}}{1+\frac{a_{1}^{-1}q^{i-j}z_{2}}{z_{1}}}\right)^{-\delta_{jk}}.
\end{eqnarray*}
\end{proof}
\begin{rem}
For $a,b\in \mathbb{C}^{\times}, i,j\in \mathbb{Z}_2,$ we have
\begin{eqnarray*}
X_{ij}(a,z)X_{ij}(b,w)=\!\!\!\!&&\!\!\!\!:X_{ij}(a,z)X_{ij}(b,w):
\\\!\!\!\!&\cdot&\!\!\!\!\left(\frac{1-\frac{w}{z}}{1+\frac{w}{z}}\right)
\left(\frac{1-\frac{a^{-1}bq^{2(i-j)}w}{z}}{1+\frac{a^{-1}bq^{2(i-j)}w}{z}}\right),
\\ \\X_{ij}(a,z)X_{ji}(b,w)=\!\!\!\!&&\!\!\!\!:X_{ij}(a,z)X_{ji}(b,w):
\\\!\!\!\!&\cdot&\!\!\!\!\left(\frac{1-\frac{bq^{j-i}w}{z}}{1+\frac{bq^{j-i}w}{z}}\right)^{-1}
\left(\frac{1-\frac{a^{-1}q^{i-j}w}{z}}{1+\frac{a^{-1}q^{i-j}w}{z}}\right)^{-1}.
\end{eqnarray*}
\end{rem}

The next lemma can be checked directly by noting that
\begin{equation*}
e^{\epsilon_{i}-\epsilon_{j}}e^{\epsilon_{j}-\epsilon_{i}}= \varepsilon(\epsilon_{i}-\epsilon_{j},\epsilon_{j}-\epsilon_{i}).
\end{equation*}
\begin{lem} \label{eq:residue}
For $a_{1},a_{2}\in \mathbb{C}^{\times}, i,j\in \mathbb{Z}_2, with ~a_{1}a_{2}=1, i\neq j,$ we have
\begin{eqnarray*}
\lim_{z_{1}\rightarrow a_{2}q^{j-i}z_{2}}:X_{ij}(a_{1},z_{1})X_{ji}(a_{2},z_{2}):
\!\!\!\!&=&\!\!\!\!:X_{ij}(a_{1},a_{2}q^{j-i}z_{2})X_{ji}(a_{2},z_{2}):
\\\!\!\!\!&=&\!\!\!\!u_{ij}\left(a_{2}^{-1},a_{2}q^{(j-i)/2}z_{2}\right),
\end{eqnarray*}
\begin{eqnarray*}
\lim_{z_{2}\rightarrow a_{1}q^{j-i}z_{1}}:X_{ij}(a_{1},z_{1})X_{ji}(a_{2},z_{2}):
\!\!\!\!&=&\!\!\!\!:X_{ij}(a_{1},z_{1})X_{ji}(a_{2},a_{1}q^{j-i}z_{1}):
\\\!\!\!\!&=&\!\!\!\!v_{ij}\left(a_{1}^{-1},q^{(j-i)/2}z_{1}\right).
\end{eqnarray*}
\end{lem}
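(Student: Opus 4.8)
The plan is to reduce everything to the explicit operator product formula (\ref{eq318}) and then carry out the bookkeeping of arguments and $q$-powers. First, I observe that the first displayed equality in each line is essentially formal: by construction the normal-ordered product $:X_{ij}(a_1,z_1)X_{ji}(a_2,z_2):$ is a juxtaposition of exponentials of creation operators and of annihilation operators with no contraction denominators, so specializing $z_1=a_2q^{j-i}z_2$ (respectively $z_2=a_1q^{j-i}z_1$) is legitimate on $V_Q$ and yields precisely $:X_{ij}(a_1,a_2q^{j-i}z_2)X_{ji}(a_2,z_2):$ (respectively $:X_{ij}(a_1,z_1)X_{ji}(a_2,a_1q^{j-i}z_1):$). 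It then remains to evaluate these specializations. A preliminary remark handles the scalar: the lattice part of (\ref{eq318}) with $(k,l)=(j,i)$ is $\varepsilon(\epsilon_i-\epsilon_j,\epsilon_j-\epsilon_i)e^{0}$, and since $(\epsilon_i-\epsilon_j,\epsilon_i-\epsilon_j)=2$ for $i\neq j$, bimultiplicativity together with (\ref{inner}) gives $\varepsilon(\epsilon_i-\epsilon_j,\epsilon_j-\epsilon_i)=\varepsilon(\epsilon_i-\epsilon_j,\epsilon_i-\epsilon_j)=(-1)^{(\epsilon_i-\epsilon_j,\epsilon_i-\epsilon_j)/2}=-1$, which accounts for the overall minus sign appearing in $u_{ij}$ and $v_{ij}$.

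Next I would write out the four $E_-$-factors and the four $E_+$-factors of $:X_{ij}(a_1,z_1)X_{ji}(a_2,z_2):$ from (\ref{eq318}), impose $a_1a_2=1$, and set $z_1=a_2q^{j-i}z_2$. On the creation side the shifted arguments collapse: $E_-(-\epsilon_j,a_1q^{i-j}z_1)=E_-(-\epsilon_j,z_2)$, which cancels the adjacent $E_-(\epsilon_j,z_2)$, and then $E_-(\epsilon_i,a_2q^{j-i}z_2)$ cancels $E_-(-\epsilon_i,a_2q^{j-i}z_2)$; these manipulations are legitimate because the $E_-$ operators commute, being built from the modes $\epsilon_\bullet(-n)$ with $n>0$. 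Hence the specialized product equals $-1$ times the four surviving commuting $E_+$-factors $E_+(\epsilon_i,a_2q^{j-i}z_2)E_+(-\epsilon_j,q^{2(j-i)}z_2)E_+(\epsilon_j,z_2)E_+(-\epsilon_i,a_2q^{i-j}z_2)$.

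Finally I would add the exponents of these $E_+$-factors, pull out the common $a_2^{-n}z_2^{-n}$, and compare with the series defining $u_{ij}(a_2^{-1},a_2q^{(j-i)/2}z_2)$. The coefficient of $\epsilon_i(n)$ is immediately $2(q^{(j-i)n}-q^{(i-j)n})a_2^{-n}z_2^{-n}/n$, and the coefficient of $\epsilon_j(n)$ matches after using $q^{(i-j)n/2}q^{-(j-i)n/2}=q^{(i-j)n}$ and the identity $(q^{(j-i)n}-q^{(i-j)n})q^{(i-j)n}=1-q^{2(i-j)n}$. Together with the $-1$ prefactor this gives the first asserted formula. For the second formula I would run the identical computation with $z_1$ and $z_2$ interchanged: when $z_2=a_1q^{j-i}z_1$ and $a_1a_2=1$ it is now the four $E_+$-factors that cancel in pairs to give $1$, while the four surviving $E_-$-factors, after the mirror identity $(q^{(i-j)n}-q^{(j-i)n})q^{(j-i)n}=1-q^{2(j-i)n}$, assemble into $v_{ij}(a_1^{-1},q^{(j-i)/2}z_1)$.

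There is no conceptual obstacle here; the only point demanding care is the bookkeeping of the argument shifts $aq^{\pm(i-j)}$ and the half-integer powers $q^{\pm(j-i)/2}$, together with checking that the pairwise cancellations on the creation (resp.\ annihilation) side genuinely occur, which is precisely where the hypothesis $a_1a_2=1$ is used.
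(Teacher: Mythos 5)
Your proposal is correct and follows essentially the same route as the paper: specialize the explicit normal-ordered product (\ref{eq318}) at $z_1=a_2q^{j-i}z_2$ (resp.\ $z_2=a_1q^{j-i}z_1$), use $a_1a_2=1$ to cancel the four creation (resp.\ annihilation) factors in pairs, and match the surviving exponentials against the defining series of $u_{ij}$ (resp.\ $v_{ij}$). The only difference is that you spell out the sign $\varepsilon(\epsilon_i-\epsilon_j,\epsilon_j-\epsilon_i)=-1$ and the legitimacy of the specialization, which the paper leaves implicit.
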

\begin{proof}[Proof \nopunct]
By (\ref{eq318}), we have
\begin{eqnarray*}
:X\!\!\!\!\!\!\!\!&&\!\!\!\!\!\!\!\!_{ij}(a_{1},a_{2}q^{j-i}z_{2})X_{ji}(a_{2},z_{2}):
\\\!\!\!\!&=&\!\!\!\!-E_{-}(\epsilon_{i},a_{2}q^{j-i}z_{2})E_{-}(-\epsilon_{j},a_{1}a_{2}z_{2})E_{-}(\epsilon_{j},z_{2})
E_{-}(-\epsilon_{i},a_{2}q^{j-i}z_{2})
\\\!\!\!\!&&\!\!\!\!\cdot E_{+}(\epsilon_{i},a_{2}q^{j-i}z_{2})E_{+}(-\epsilon_{j},a_{1}a_{2}q^{2(j-i)}z_{2})E_{+}(\epsilon_{j},z_{2})
E_{+}(-\epsilon_{i},a_{2}q^{i-j}z_{2})
\\\!\!\!\!&=&\!\!\!\!-E_{+}(\epsilon_{i},a_{2}q^{j-i}z_{2})E_{+}(-\epsilon_{j},q^{2(j-i)}z_{2})E_{+}(\epsilon_{j},z_{2})
E_{+}(-\epsilon_{i},a_{2}q^{i-j}z_{2})
\\\!\!\!\!&=&\!\!\!\!u_{ij}\left(a_{2}^{-1},a_{2}q^{(j-i)/2}z_{2}\right).
\end{eqnarray*}
The other equation can be proved similarly.
\begin{eqnarray*}
:X\!\!\!\!\!\!\!\!&&\!\!\!\!\!\!\!\!_{ij}(a_{1},z_{1})X_{ji}(a_{2},a_{1}q^{j-i}z_{1}):
\\\!\!\!\!&=&\!\!\!\!-E_{-}(\epsilon_{i},z_{1})E_{-}(-\epsilon_{j},a_{1}q^{i-j}z_{1})E_{-}(\epsilon_{j},a_{1}q^{j-i}z_{1})
E_{-}(-\epsilon_{i},q^{2(j-i)}z_{1})
\\\!\!\!\!&=&\!\!\!\!v_{ij}\left(a_{1}^{-1},q^{(j-i)/2}z_{1}\right),
\end{eqnarray*}
as desired.
\end{proof}

The following technical lemma will be needed later.
\begin{lem}
\cite{GJ} For any $a,b\in\mathbb{ C}~and ~a\neq b, ~we ~have ~in ~\mathbb{C}\big[[z,z^{-1}]\big]$
\begin{equation}
(1-az)^{-1}(1-bz)^{-1}=\frac{z^{-1}}{a-b}\Big((1-az)^{-1}-(1-bz)^{-1}\Big).
\end{equation}
\end{lem}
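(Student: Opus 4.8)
The plan is to prove the identity by clearing denominators, which reduces it to a one-line polynomial check, and then to confirm it by comparing the coefficients of the two formal power series.

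First I would fix the interpretation: in the formal power series ring one reads $(1-az)^{-1}=\sum_{n\geq 0}a^nz^n$ and $(1-bz)^{-1}=\sum_{n\geq 0}b^nz^n$, so that each expression in the statement is a genuine element of $\mathbb{C}\big[[z]\big]\subset\mathbb{C}\big[[z,z^{-1}]\big]$ and all products are unambiguous; the hypothesis $a\neq b$ is used only to make sense of the scalar $\tfrac1{a-b}$. Then I would multiply the asserted identity through by the unit $(1-az)(1-bz)$ (a unit because its constant term is $1$). The left-hand side becomes $1$, while the right-hand side becomes
\[
\frac{z^{-1}}{a-b}\big((1-bz)-(1-az)\big)=\frac{z^{-1}}{a-b}\,(a-b)z=1 ,
\]
so cancelling the unit yields the claim.

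As a cross-check I would also expand both sides directly. On the left, the Cauchy product gives $(1-az)^{-1}(1-bz)^{-1}=\sum_{n\geq 0}\big(\sum_{i+j=n}a^ib^j\big)z^n=\sum_{n\geq 0}\frac{a^{n+1}-b^{n+1}}{a-b}z^n$ by the finite geometric sum (legitimate since $a\neq b$); on the right, $\frac{z^{-1}}{a-b}\big(\sum_{n\geq 0}a^nz^n-\sum_{n\geq 0}b^nz^n\big)=\frac{1}{a-b}\sum_{n\geq 0}(a^{n+1}-b^{n+1})z^n$, which agrees termwise. There is essentially no obstacle: the only point deserving a word of care is that the computation lives in the formal power series ring, where $(1-az)$ is invertible precisely because its constant term is nonzero, so the reduction ``multiply by $(1-az)(1-bz)$'' is reversible.
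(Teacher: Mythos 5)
Your proof is correct. The paper itself gives no argument for this lemma (it is simply quoted from the reference [GJ]), so there is nothing to diverge from; your two verifications --- multiplying through by the unit $(1-az)(1-bz)$ in $\mathbb{C}[[z]]$, and the termwise expansion via $\sum_{i+j=n}a^ib^j=\frac{a^{n+1}-b^{n+1}}{a-b}$ --- are both the standard way to see it, and you correctly flag the one point needing care, namely that all series involved lie in $\mathbb{C}[[z]]$ (up to the harmless overall factor $z^{-1}$, which is absorbed because the constant terms cancel), so that the multiplication is legitimate and reversible.
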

Now we can compute a special commutation relation.
\begin{pro}\label{eq:comm}
If ~ab=1, i, j $\in\mathbb{Z}_2$, then
\begin{eqnarray*}
\!\!\!\!&&\!\!\!\![X_{ij}(a,z),X_{ji}(b,w)]
\\\!\!\!\!&&\!\!\!\!=\frac{2(q^{i-j}+q^{j-i})}{q^{j-i}-q^{i-j}}\left\{u_{ij}(b^{-1},bq^{(j-i)/2}w)\delta(q^{j-i}\frac{bw}{z})
-v_{ij}(b,q^{j-i/2}z)\delta(q^{i-j}\frac{bw}{z})\right\}.
\end{eqnarray*}
\end{pro}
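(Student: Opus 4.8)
The plan is to compute the product $X_{ij}(a,z)X_{ji}(b,w)$ and then $X_{ji}(b,w)X_{ij}(a,z)$ as rational functions times the same normal-ordered series, using the operator product expansion from Proposition~\ref{eq:OPE}, and subtract. First I would specialize the formula of Proposition~\ref{eq:OPE} to the case $(k,l)=(j,i)$ with $a_1=a$, $a_2=b$: since $i\neq j$ we have $\delta_{ik}=\delta_{jl}=0$ and $\delta_{il}=\delta_{jk}=1$, so the OPE collapses to
\[
X_{ij}(a,z)X_{ji}(b,w)=\;:X_{ij}(a,z)X_{ji}(b,w):\;\Big(\tfrac{1-bq^{j-i}w/z}{1+bq^{j-i}w/z}\Big)^{-1}\Big(\tfrac{1-a^{-1}q^{i-j}w/z}{1+a^{-1}q^{i-j}w/z}\Big)^{-1},
\]
which matches the second formula in Remark~2. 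Using $ab=1$, the two poles in $w/z$ are at $z=bq^{j-i}w$ (from the first factor) and at $z=b q^{i-j}w$ (from the second, since $a^{-1}=b$). Interchanging $z$ and $w$ (equivalently applying the OPE with the roles of the two operators swapped) gives the expansion of $X_{ji}(b,w)X_{ij}(a,z)$ in the region $|w|\gg|z|$ with poles at $w=a q^{j-i}z=b^{-1}q^{j-i}z$ and $w=b^{-1}q^{i-j}z$.

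Next I would take the difference of the two rational prefactors, each multiplied by the same normal-ordered product $:X_{ij}(a,z)X_{ji}(b,w):$ (the normal-ordered product is symmetric, as already noted in the excerpt). Writing each factor $\frac{1-t}{1+t}=1-\frac{2t}{1+t}$ or rather expanding $\big(\frac{1-t}{1+t}\big)^{-1}=\frac{1+t}{1-t}$, and clearing denominators, the commutator becomes $:X_{ij}(a,z)X_{ji}(b,w):$ times an expression of the form $\frac{(1+t_1)(1+t_2)}{(1-t_1)(1-t_2)}$ evaluated with opposite expansions, where $t_1=bq^{j-i}w/z$ and $t_2=bq^{i-j}w/z$. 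Here I would invoke the technical Lemma~3 (the partial-fraction identity $(1-az)^{-1}(1-bz)^{-1}=\frac{z^{-1}}{a-b}\{(1-az)^{-1}-(1-bz)^{-1}\}$) to split the product of the two simple poles into a sum of two simple poles; the difference of the two expansions of a single simple pole $\frac{1}{1-cw/z}$ is exactly a formal delta function $\delta(cw/z)$, up to the standard normalization. This is the mechanism that produces the two $\delta$-function terms $\delta(q^{j-i}bw/z)$ and $\delta(q^{i-j}bw/z)$ in the statement.

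The final step is bookkeeping: on the support of $\delta(q^{j-i}bw/z)$ one has $z=bq^{j-i}w$, so $:X_{ij}(a,z)X_{ji}(b,w):$ restricts to $:X_{ij}(a,bq^{j-i}w)X_{ji}(b,w):$, which by Lemma~\ref{eq:residue} (first identity, with $a_1=a=b^{-1}$, $a_2=b$, $z_2=w$) equals $u_{ij}(b^{-1},bq^{(j-i)/2}w)$; similarly the $\delta(q^{i-j}bw/z)$ term, where $w=b^{-1}q^{j-i}z=aq^{j-i}z$, restricts the normal-ordered product to $:X_{ij}(a,z)X_{ji}(b,aq^{j-i}z):$, which by the second identity of Lemma~\ref{eq:residue} equals $v_{ij}(a^{-1},q^{(j-i)/2}z)=v_{ij}(b,q^{(j-i)/2}z)$. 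I also need to track the scalar prefactor: the residues of the two simple-pole terms, produced by Lemma~3, combine with the sign from $\big(\tfrac{1-t}{1+t}\big)^{-1}$ and the factor $q^{j-i}-q^{i-j}$ in the denominator coming from $t_1-t_2 = b(q^{j-i}-q^{i-j})w/z$, to yield the overall constant $\frac{2(q^{i-j}+q^{j-i})}{q^{j-i}-q^{i-j}}$; evaluating the ``other'' pole's numerator $1+t$ at the delta support gives the factor $2$ and the $q^{i-j}+q^{j-i}$ in the numerator. The main obstacle I anticipate is precisely this constant-and-sign tracking through the partial-fraction step — making sure the opposite-region expansions are paired correctly so that the residues do not cancel, and that the half-integer powers $q^{(j-i)/2}$ appearing in the arguments of $u_{ij}$ and $v_{ij}$ come out consistently with Lemma~\ref{eq:residue}; the structural part of the argument is routine OPE manipulation.
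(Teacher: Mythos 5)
Your proposal is correct and follows essentially the same route as the paper: specialize the OPE of Proposition~1 to $(k,l)=(j,i)$ (Remark~2), apply the partial-fraction Lemma~3 to convert the difference of opposite-region expansions into the two formal delta functions, and then evaluate the symmetric normal-ordered product on each delta support via Lemma~2 to produce $u_{ij}$ and $v_{ij}$, with the constant $\frac{2(q^{i-j}+q^{j-i})}{q^{j-i}-q^{i-j}}$ coming from evaluating the numerator factors $(z+bq^{j-i}w)(z+bq^{i-j}w)/(zwb(q^{j-i}-q^{i-j}))$ at the delta supports. The bookkeeping you flag as the main obstacle is exactly what the paper carries out explicitly, and your description of where the $2$ and the $q^{i-j}+q^{j-i}$ arise is consistent with that computation.
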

\begin{proof}[Proof \nopunct] By using Remark 2, one obtains that
\begin{eqnarray*}
[X_{ij}\!\!\!\!\!\!&&\!\!\!\!\!\!(a,z),X_{ji}(b,w)]
\\=\!\!\!\!&&\!\!\!\!X_{ij}(a,z)X_{ji}(b,w)-X_{ji}(b,w)X_{ij}(a,z)
\\=\!\!\!\!&:&\!\!\!\!X_{ij}(a,z)X_{ji}(b,w):
\\\!\!\!\!&\cdot&\!\!\!\!\left(\frac{z+bq^{j-i}w}{z-bq^{j-i}w}\frac{z+bq^{i-j}w}{z-bq^{i-j}w}
-\frac{w+b^{-1}q^{j-i}z}{w-b^{-1}q^{j-i}z}\frac{w+b^{-1}q^{i-j}z}{w-b^{-1}q^{i-j}z}\right).
\end{eqnarray*}
Observe that
\begin{eqnarray*}
\frac{z+bq^{j-i}w}{z-bq^{j-i}w}\frac{z+bq^{i-j}w}{z-bq^{i-j}w}\!\!\!\!&=&\!\!\!\!\frac{\left(z+bq^{j-i}w\right)\left(z+bq^{i-j}w\right)}
{zwb\left(q^{j-i}-q^{i-j}\right)}
\\\!\!\!\!&&\!\!\!\!\cdot\left((1-bq^{j-i}\frac{w}{z})^{-1}-(1-bq^{i-j}\frac{w}{z})^{-1}\right),
\end{eqnarray*}
\begin{eqnarray*}
\frac{w+b^{-1}q^{j-i}z}{w-b^{-1}q^{j-i}z}\frac{w+b^{-1}q^{i-j}z}{w-b^{-1}q^{i-j}z}\!\!\!\!&=&\!\!\!\!\frac{\left(z+bq^{j-i}w\right)\left(z+bq^{i-j}w\right)}
{zwb\left(q^{j-i}-q^{i-j}\right)}
\\\!\!\!\!&&\!\!\!\!\cdot\left((1-b^{-1}q^{j-i}\frac{z}{w})^{-1}-(1-b^{-1}q^{i-j}\frac{z}{w})^{-1}\right).
\end{eqnarray*}
Therefore the above becomes
\begin{eqnarray*}
[X_{ij}\!\!\!\!\!\!&&\!\!\!\!\!\!(a,z)X_{ji}(b,w)]
\\=\!\!\!\!&:&\!\!\!\!X_{ij}(a,z)X_{ji}(b,w):\frac{(z+bq^{j-i}w)(z+bq^{i-j}w)}{zwb(q^{j-i}-q^{i-j})}\Delta_{ij}(z,w),
\end{eqnarray*}
\\where
\begin{eqnarray*}
\Delta_{ij}(z,w)\!\!\!\!&=&\!\!\!\!\left(1-bq^{j-i}\frac{w}{z}\right)^{-1}-\left(1-bq^{i-j}\frac{w}{z}\right)^{-1}
\\\!\!\!\!&&\!\!\!\!+\left(1-b^{-1}q^{i-j}\frac{z}{w}\right)^{-1}
-\left(1-b^{-1}q^{j-i}\frac{z}{w}\right)^{-1}
\\\!\!\!\!&=&\!\!\!\!\delta(q^{j-i}\frac{bw}{z})-\delta(q^{i-j}\frac{bw}{z}),
\end{eqnarray*}
here, one used the property of the $\delta-$function. By Lemma \ref{eq:residue}, one gets the desired result.
\end{proof}

\begin{thm}
The linear map $\pi$ given by
\begin{eqnarray*}
\pi(x_{1n}^{+})\!\!\!\!&=&\!\!\!\!X_{01}(1,n) ,~~~~~~~\pi(x_{1n}^{-})=X_{10}(1,n),
\\ \pi(\phi_{1n}^{+})\!\!\!\!&=&\!\!\!\!u_{01}(1,n), ~~~~~~~~\pi(\phi_{1n}^{-})=v_{01}(1,n),
\\  \pi(x_{0n}^{+})\!\!\!\!&=&\!\!\!\!X_{01}(-1,n), ~~~~~\pi(x_{0}^{-}(z))=X_{10}(-1,-z),
\\ \pi(\phi_{0n}^{+})\!\!\!\!&=&\!\!\!\!u_{01}(-1,n), ~~~~~~\pi(\phi_{0n}^{-})=v_{01}(-1,n),
\\ \pi(q^{\frac{c}{2}})\!\!\!\!&=&\!\!\!\!q^{\frac{1}{2}},
\end{eqnarray*}
gives a representation of the twisted quantum toroidal algebra of type $A_{1}$.
\end{thm}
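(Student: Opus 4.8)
The plan is to verify that the operators in the statement satisfy the defining relations of the algebra, working throughout with their generating-series form; since every one of those relations is an operator product identity, the whole argument reduces to a sequence of normal-ordering computations assembled from Proposition~\ref{eq:OPE} and the Remarks following it, from Lemma~\ref{eq:residue}, and from Proposition~\ref{eq:comm}. The first point is that $\pi(q^{\pm c/2})=q^{\pm 1/2}$ is a scalar, hence central, and that $c$ acts by $1$; I therefore put $c=1$ in all the relations, so that $[mc]$ becomes $[m]$, $q^{\mp|m|c/2}$ becomes $q^{\mp|m|/2}$, $q^{c}$ becomes $q$ and $q^{c/2}$ becomes $q^{1/2}$. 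It then remains to treat the $\phi$--$\phi$ relations, the $\phi$--$x$ conjugation relations \eqref{commutator1}--\eqref{commutator2}, and the $x^{+}$--$x^{-}$ relation; the Heisenberg relations \eqref{R2}--\eqref{R3} are equivalent to the first family together with extraction of modes, and \eqref{R4}--\eqref{R5} to the second, so they need no separate treatment.

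For the $\phi$--$\phi$ relations, note that $\pi(\phi_{i}^{+}(z))=u_{01}(\pm1,z)$ and $\pi(\phi_{i}^{-}(z))=v_{01}(\pm1,z)$ are exponentials in, respectively, the ``positive'' Heisenberg modes $\epsilon_{k}(n)$ with $n>0$ and the ``negative'' modes with $n<0$, each set commuting internally; hence $[\pi(\phi_{i}^{+}(z)),\pi(\phi_{j}^{+}(w))]=[\pi(\phi_{i}^{-}(z)),\pi(\phi_{j}^{-}(w))]=0$ at once. For the mixed relation $\phi_{i}^{+}(z)\phi_{j}^{-}(w)=\phi_{j}^{-}(w)\phi_{i}^{+}(z)\,G_{|i-j|}(q^{c}w/z)/G_{|i-j|}(q^{-c}w/z)$ one evaluates the scalar $[\log u_{01}(a,z),\log v_{01}(b,w)]$ from $[\epsilon_{k}(m),\epsilon_{l}(n)]=\frac{m}{2}(\epsilon_{k},\epsilon_{l})\delta_{m,-n}$ and exponentiates, then identifies the resulting rational function with the Taylor series $G_{0}$ when $i=j$ (here $a=b=\pm1$) and with $G_{1}$ when $i\neq j$, where $\pi(\phi_{1}^{+})=u_{01}(1,\cdot)$ while $\pi(\phi_{0}^{-})=v_{01}(-1,\cdot)$; the clash between the parameters $a=1$ and $b=-1$ together with the shifts $q^{\pm(k-l)}$ in the arguments of the constituent factors $E_{\pm}$ is exactly what produces the extra factor $((1+x)/(1-x))^{2}$ distinguishing $G_{1}$ from $G_{0}$.

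For the conjugation relations, conjugating a vertex operator $X_{01}(\pm1,w)$ or $X_{10}(\pm1,\pm w)$ by $u_{01}(\pm1,z)$ affects only its $E_{-}$ factors and multiplies it by a scalar read off from the Heisenberg pairing via \eqref{eq-OPE}; one collects the pairs $E_{+}(\pm\epsilon_{k},\cdot)$, $E_{-}(\pm\epsilon_{l},\cdot)$ that contribute, forms the product of the corresponding $(1-t)/(1+t)$-factors, and checks that this equals $G_{|i-j|}(\frac{w}{z}q^{\mp c/2})^{\pm1}$ — the half-integer shift $q^{\mp c/2}$ and, for the index $0$, the reflection $z\mapsto -z$ built into $\pi(x_{0}^{-}(z))=X_{10}(-1,-z)$ must be tracked here. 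Relation \eqref{commutator2} is obtained identically, conjugating instead by $v_{01}(\pm1,z)$, which acts on the $E_{+}$ factors. Finally, the relation $[x_{i}^{+}(z),x_{i}^{-}(w)]=\frac{2(q+q^{-1})}{q-q^{-1}}\{\phi_{i}^{+}(q^{c/2}w)\delta(q^{c}w/z)-\phi_{i}^{-}(q^{c/2}z)\delta(q^{-c}w/z)\}$ is Proposition~\ref{eq:comm}: for $i=1$ apply it with $a=b=1$, so that with $(i,j)=(0,1)$ the constant $\frac{2(q^{i-j}+q^{j-i})}{q^{j-i}-q^{i-j}}$ becomes $\frac{2(q+q^{-1})}{q-q^{-1}}$, the residue terms $u_{01}(1,q^{1/2}w)$, $v_{01}(1,q^{1/2}z)$ become $\pi(\phi_{1}^{+}(q^{1/2}w))$, $\pi(\phi_{1}^{-}(q^{1/2}z))$, and the $\delta$-functions agree with $c=1$; for $i=0$ apply Proposition~\ref{eq:comm} with $a=b=-1$ and with the spectral variable $-w$ in place of $w$, whereupon the two occurrences of $-1$ cancel in every term, $u_{01}(-1,\cdot)$ and $v_{01}(-1,\cdot)$ become $\pi(\phi_{0}^{+})$ and $\pi(\phi_{0}^{-})$, and one again recovers $\pi$ of the right-hand side.

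The step I expect to be the main obstacle is the $i\neq j$ bookkeeping: carrying the various factors $E_{\pm}(\pm\epsilon_{k},\,aq^{\pm(k-l)}z)$ and their precise arguments through each operator product so that the rational prefactors come out to be exactly $G_{0}$ and $G_{1}$, and, most delicately, checking that the ``$-1$'' twist used to realize the index-$0$ generators — the parameter $a=-1$ together with the reflections $z\mapsto -z$ — is globally consistent, i.e.\ produces precisely the signs demanded by \eqref{R3}, \eqref{R5} and by the $i=0$ instance of \eqref{R6}. Once the $\phi^{+}\phi^{-}$ operator product and the $\phi$--$x$ conjugation formulas have been matched to $G_{0}$ and $G_{1}$, the remainder is routine; in particular the Serre relations of Section~2 follow either inside the algebra from \eqref{R2}--\eqref{R6}, or directly by clearing the polynomial prefactors in Proposition~\ref{eq:OPE} and using the symmetry of the relevant normal-ordered products $:X_{ij}(a,z)X_{kl}(b,w):$.
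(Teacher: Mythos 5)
Your handling of the Drinfeld-type relations --- the $\phi$--$\phi$ relations, the conjugation relations (\ref{commutator1})--(\ref{commutator2}), the Heisenberg relations obtained by taking logarithms of $u_{01},v_{01}$, and the $x^{+}$--$x^{-}$ commutator via Proposition~\ref{eq:comm} with $a=b=\pm1$ --- follows essentially the same route as the paper and is fine in outline. The genuine gap is in your final paragraph, where you dismiss the Serre relations as routine. First, they do \emph{not} ``follow inside the algebra from (\ref{R2})--(\ref{R6})'': in presentations of this Drinfeld type the Serre relations are imposed precisely because they are not consequences of the other relations, so that route is unavailable. Second, ``clearing the polynomial prefactors and using the symmetry of the normal-ordered products'' does dispose of the two-variable relations (\ref{serre1})--(\ref{serre2}), but it does not touch the four-variable relation $Sym_{z_{1},z_{2},z_{3}}\{\cdots\}=0$ for $i\neq j$. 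There the four summands carry \emph{different} rational prefactors: depending on whether $E_{0}(w)$ sits to the right or to the left of $E_{1}(z_{k})$, the contraction attached to $z_{k}$ involves $q^{-2}$ or $q^{2}$, so after extracting the common normal-ordered product one is left with a sum of four distinct rational functions whose symmetrization over $\mathfrak{S}_{3}$ must be shown to vanish. This is exactly the content of the paper's Lemma~\ref{eq:combin}, which is proved by clearing denominators and checking that the coefficient of each power $w^{k}$, $0\leqslant k\leqslant 6$, of the resulting polynomial vanishes after symmetrization against $\prod_{i<j}(z_{i}-z_{j})$. That combinatorial identity is the main nontrivial step of the entire verification, and your proposal contains no argument for it.
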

\begin{proof}[Proof \nopunct]
To prove the theorem, we need to verify that the defined operators satisfy all
commutation relations of the twisted quantum toroidal algebra. The usual vertex operator technique gives that
\begin{small}
\begin{equation*}
(z-q^{2}w)(z+q^{-2}w)X_{01}(1,z)X_{01}(1,w)=(z-q^{-2}w)(z+q^{2}w)X_{01}(1,w)X_{01}(1,z).
\end{equation*}
\end{small}
If follows from Proposition \ref{eq:comm} that
\begin{eqnarray*}
[X_{01}\!\!\!\!\!\!&&\!\!\!\!\!\!(1,z),X_{10}(1,w)]
\\=\!\!\!\!&&\!\!\!\!\frac{2(q+q^{-1})}{q-q^{-1}}\Big\{u_{01}(1,q^{1/2}w)\delta(q\frac{w}{z})
-v_{01}(1,q^{1/2}z)\delta(q^{-1}\frac{w}{z})\Big\}.
\end{eqnarray*}

Taking the derivative on the operators~$u_{ij}(a,z)$ and $v_{ij}(a,z)$, the map $\pi$ in terms of components is given by
\begin{eqnarray*}
h_{1m}\!\!\!\!&&\!\!\!\!\rightarrow\left(q^{-\frac{|m|}{2}}\epsilon_{1} (m)-q^{\frac{|m|}{2}}\epsilon_{2}(m)\right)\frac{[m]}{m},
\\h_{0m}\!\!\!\!&&\!\!\!\!\rightarrow-\left(q^{\frac{|m|}{2}}\epsilon_{0} (m)+q^{-\frac{|m|}{2}}\epsilon_{1}(m)\right)\frac{[m]}{m}.
\end{eqnarray*}
It follows that
\begin{equation*}
[\pi(h_{1m}),\pi(h_{1n})]=\frac{[2m]}{2m}[m]\delta_{m,-n}
\end{equation*}
with
\begin{equation*}
\pi(q^{\frac{c}{2}})=q^{\frac{1}{2}}.
\end{equation*}
 Similarly  one can check that
\begin{equation*}
[\pi(h_{1m}),\pi(x_{1n}^{\pm})]=\mp\frac{[2m]}{m}q^{ \mp\frac{|m|}{2}}\pi(x_{1,m+n}^{\pm}).
\end{equation*}

Relations (\ref{commutator1}--\ref{commutator2}) can be proved similarly. We only show one case in the following.
\\$u_{01}(1,z)X_{01}(1,w)$
\\\hspace*{3mm }$=X_{01}(1,w)u_{01}(1,z)$
\begin{eqnarray*}
\!\!\!\!&&\!\!\!\!\cdot\exp\Big[2\sum_{n>0,odd}\frac{q^{n}-q^{-n}}{n}\Big(q^{\frac{n}{2}}\epsilon_{0}(n)-q^
{-\frac{n}{2}}\epsilon_{1}(n)\Big)z^{-n}
,\hspace*{40mm }
\\\!\!\!\!&&\!\!\!\!\hspace*{50mm }-2\sum_{m<0,odd}\frac{\epsilon_{0}(m)-q^{-|m|}\epsilon_{1}(m)}{m}w^{-m}\Big]
\\\!\!\!\!&=&\!\!\!\!X_{01}(1,w)u_{01}(1,z)\cdot \exp\Big\{2\sum_{n>0,odd}\big(\frac{q^{3n/2}}{n}-\frac
{q^{-5n/2}}{n}\big)\big(\frac{w}{z}\big)^{n}\Big\}
\\\!\!\!\!&=&\!\!\!\!X_{1,2}(1,w)u_{1,2}(1,z)\frac{z+q^{3/2}w}{z-q^{3/2}w}\cdot \frac{z-q^{-5/2}w}{z+q^{-5/2}w}.
\end{eqnarray*}

Define
\begin{equation*}
\pi(x_{i}^{+}(z))=E_{i}(z),\quad \pi(x_{i}^{-}(z))=F_{i}(z).
\end{equation*}
 Finally, one proves the Serre relations.
The following Operator Product Expansions are direct consequences of Proposition \ref{eq:OPE}:
\begin{eqnarray*}
E_1 (z_1 )E_1 (z_2 )\!\!\!\!&=&\!\!\!\!:E_1 (z_1 )E_1 (z_2 ):\frac{{1 - z_2 /z_1 }}{{1 + z_2 /z_1 }}\frac{{1 - q^{ - 2} z_2 /z_1 }}{{1 + q^{ - 2} z_2 /z_1 }},
\\E_1 (z)E_0 (w)\!\!\!\!&=&\!\!\!\!:E_1 (z)E_0 (w):\frac{{1 - w/z}}{{1 + w/z}}\frac{{1 + q^{-2}w/z}}{{1 - q^{-2}w/z}},
\\E_0 (w)E_1 (z)\!\!\!\!&=&\!\!\!\!:E_0 (w)E_1 (z):\frac{{1 - z/w}}{{1 +z/w}}\frac{{1 +q^{-2} z/w}}{{1 - q^{-2}z/w}}.
\end{eqnarray*}
Then one has
\begin{eqnarray*}
E_1 (z_1 )\!\!\!\!&&\!\!\!\!E_1 (z_2 )E_1 (z_3 )E_0 (w) = :E_1 (z_1 )E_1 (z_2 )E_1 (z_3 )E_0 (w):
\\\!\!\!\!&&\!\!\!\!\cdot \mathop\prod \limits_{i < j} \frac{{1 - z_j /z_i }}{{1 + z_j /z_i }}  \frac{{1 - q^{ - 2} z_j /z_i }}{{1 + q^{ - 2} z_j /z_i }} \cdot \mathop \prod \limits_{i = 1}^3 \frac{{1 - w/z_i }}{{1 + w/z_i }}  \frac{{1 + q^{-2}w/z_i }}{{1 - q^{-2}w/z_i }},
\\E_1 (z_1 )\!\!\!\!&&\!\!\!\!E_1 (z_2 )E_0 (w)E_1 (z_3 ) = :E_1 (z_1 )E_1 (z_2 )E_0 (w)E_1 (z_3 ):
\\\!\!\!\!&&\!\!\!\!\cdot\mathop \prod \limits_{i < j} \frac{{1 - z_j /z_i }}{{1 + z_j /z_i }}  \frac{{1 - q^{ - 2} z_j /z_i }}{{1 + q^{ - 2} z_j /z_i }}\cdot\mathop \prod \limits_{i = 1}^2\frac{{1 - w/z_i }}{{1 + w/z_i }}  \frac{{1 + q^{-2}w/z_i }}{{1 - q^{-2}w/z_i }}\\\!\!\!\!&&\!\!\!\!\cdot\frac{{1 -w/z_3 }}{{1 +w/z_3 }} \frac{{1 + q^{2}w/z_3 }}{{1 - q^{2}w/z_3 }},
\\E_1 (z_1 )\!\!\!\!&&\!\!\!\!E_0 (w)E_1 (z_2 )E_1 (z_3 ) = :E_1 (z_1 )E_0 (w)E_1 (z_2 )E_1 (z_3 ):
\\\!\!\!\!&&\!\!\!\!\cdot\mathop \prod \limits_{i < j} \frac{{1 - z_j /z_i }}{{1 + z_j /z_i }}  \frac{{1 - q^{ - 2} z_j /z_i }}{{1 + q^{ - 2} z_j /z_i }}\cdot\mathop \prod \limits_{i = 2}^3 \frac{{1 -w/z_i }}{{1 +w/z_i }} \frac{{1 + q^{2}w/z_i }}{{1 - q^{2}w/z_i }},
\\\!\!\!\!&&\!\!\!\!\cdot\frac{{1 - w/z_1 }}{{1 + w/z_1 }}  \frac{{1 + q^{-2}w/z_1 }}{{1 - q^{-2}w/z_1}},
\\E_0 (w)\!\!\!\!&&\!\!\!\!E_1 (z_1 )E_1 (z_2 )E_1 (z_3 ) = :E_0 (w)E_1 (z_1 )E_1 (z_2 )E_1 (z_3 ):
\\\!\!\!\!&&\!\!\!\!\cdot \mathop \prod \limits_{i < j} \frac{{1 - z_j /z_i }}{{1 + z_j /z_i }}  \frac{{1 - q^{ - 2} z_j /z_i }}{{1 + q^{ - 2} z_j /z_i }} \cdot \mathop \prod \limits_{i = 1}^3  \frac{{1 -w/z_i }}{{1 +w/z_i }} \frac{{1 + q^{2}w/z_i }}{{1 - q^{2}w/z_i }}.
\end{eqnarray*}
Furthermore one has
\begin{eqnarray*}
\!\!\!\!&&\!\!\!\!\mathop \Pi _{k<l}(z_{k}+q^{-2}z_{l})(z_{l}-q^{-2}z_{k})\{E_1 (z_1 )E_1 (z_2 )E_1 (z_3 )E_0 (w)+E_1 (z_1 )E_1 (z_2 )
\\\!\!\!\!&&\!\!\!\!\cdot E_0 (w)E_1 (z_3 )+E_1 (z_1 )E_0 (w)E_1 (z_2 )E_1 (z_3 )+E_0 (w)E_1 (z_1 )E_1 (z_2 )E_1 (z_3 )\}
\\=\!\!\!\!&&\!\!\!\!:E_1 (z_1 )E_1 (z_2 )E_1 (z_3 )E_0 (w):\mathop \prod \limits_{k< l} \frac{z_k  - z_l }{{z_k  + z_l}}(z_k  - q^{ - 2} z_l)(z_l-q^{-2} z_k)
\\\cdot\!\!\!\!&&\!\!\!\!\mathop \prod \limits_{i = 1}^3\frac{z_{i}-w}{z_{i}+w}\Big\{\mathop \prod \limits_{i = 1}^3  \frac{{1 + q^{-2}w/z_i }}{{1 - q^{-2}w/z_i }}+\mathop \prod \limits_{i = 1}^2 \frac{{1 + q^{ - 2} w/z_i }}{{1 - q^{ - 2} w/z_i }}\cdot\frac{{1 + q^{ 2} w/z_3 }}{{1 - q^{ 2} w/z_3 }}
\end{eqnarray*}
\begin{eqnarray*}
\!\!\!\!&&\!\!\!\!+\mathop \prod \limits_{i = 2}^3 \frac{{1 + q^{2}w/z_i }}{{1 - q^{2}w/z_i }}\cdot\frac{{1 +q^{ - 2} w/z_1 }}{{1 - q^{ - 2} w/z_1 }} +\mathop \prod \limits_{i = 1}^3 \frac{{1 + q^{2}w/z_i }}{{1 - q^{2}w/z_i }}\Big\}
\\=\!\!\!\!&&\!\!\!\!:E_1 (z_1 )E_1 (z_2 )E_1 (z_3 )E_0 (w):
\mathop \prod \limits_{k < l} \frac{z_k - z_l }{{z_k  + z_l }}(z_k-q^{ - 2} z_l )(z_l - q^{-2} z_k )
\\\!\!\!\!&&\!\!\!\!\cdot\mathop \prod_{i=1}^{3}\frac{(z_{i}-q^{2}w)^{-1}(z_{i}-w)}{(z_{i}-q^{-2}w)(z_{i}+w)}\Big\{
\mathop \prod _{i=1}^{3}\left(z_i ^2  - \left(q^{2} - q^{ - 2} \right)z_{i}w  - w^2 \right)
\\\!\!\!\!&&\!\!\!\!+\left( {z_3 ^2  + \left( {q^{2} - q^{ - 2} } \right)z_{3}w  - w^2 } \right)\mathop \prod_{i=1}^{2}\left( {z_i ^2 -\left( {q^{2} - q^{ - 2} } \right)z_{i}w - w^2 } \right)
\\\!\!\!\!&&\!\!\!\!+\left( {z_1 ^2 -\left( {q^{2} - q^{ - 2} } \right)z_{1}w - w^2 } \right)\mathop \prod_{i=2}^{3}\left( {z_i ^2 +\left( {q^{2} - q^{ - 2} } \right)z_{i}w - w^2 } \right)
\\\!\!\!\!&&\!\!\!\!+\mathop \prod_{i=1}^{3}\left( {z_i ^2 +\left( {q^{2} - q^{ - 2} } \right)z_{i}w - w^2 } \right)\Big\}.
\end{eqnarray*}

Therefore the Serre relation holds if the following combinatorial identity is true.
\begin{lem}\label{eq:combin}
Let $\mathfrak{S}_{3}$ act on $z_{1},z_{2},z_{3}~via~\sigma.z_{i}=z_{\sigma(i)}$. Then
\begin{eqnarray*}
\!\!\!\!&&\!\!\!\!\sum \limits_{\sigma  \in \mathfrak{S}_3 }\sigma .\Big\{\big[\mathop \prod _{i=1}^{3}\left(z_i ^2  - \left(q^{2} - q^{ - 2} \right)z_{i}w  - w^2 \right)
\\\!\!\!\!&&\!\!\!\!+\left( {z_3 ^2  + \left( {q^{2} - q^{ - 2} } \right)z_{3}w  - w^2 } \right)\mathop \prod_{i=1}^{2}\left( {z_i ^2 -\left( {q^{2} - q^{ - 2} } \right)z_{i}w - w^2 } \right)
\\\!\!\!\!&&\!\!\!\!+\left( {z_1 ^2 -\left( {q^{2} - q^{ - 2} } \right)z_{1}w - w^2 } \right)\mathop \prod_{i=2}^{3}\left( {z_i ^2 +\left( {q^{2} - q^{ - 2} } \right)z_{i}w - w^2 } \right)
\\\!\!\!\!&&\!\!\!\!+\mathop \prod _{i=1}^{3}\left( {z_i ^2 +\left( {q^{2} - q^{ - 2} } \right)z_{i}w - w^2 } \right)
\big]\cdot\mathop \prod \limits_{i < j} (z_i - z_j )\Big\}=0.
\end{eqnarray*}
\begin{proof}[Proof \nopunct]
Considering the left-hand side as a polynomial in $w$, the constant term is
\begin{equation}
\sum \limits_{\sigma  \in \mathfrak{S}_3 }\sigma.[4(z_{1}z_{2}z_{3})^{2}\mathop \prod \limits_{i < j} (z_i - z_j )]=0.
\end{equation}
Similarly the highest coefficient of $w^{6}$  is seen to be zero.

The coefficients of $w$ and $w^{5}$ are essentially the same up to swapping of $z_{i}$ with $z_{i}^{-1}$. Thus the identity in the validity of Lemma \ref{eq:combin} is reduced to the following identity.
\begin{equation}\label{combid}
\sum \limits_{\sigma  \in \mathfrak{S}_3 }\sigma.
\big(2z_{1}z_{2}z_{3}(q^{2}-q^{-2})(z_{1}z_{2}-z_{2}z_{3})\big)\mathop \prod \limits_{i < j} (z_i - z_j )=0.
\end{equation}

Identity (\ref{combid}) is easily checked by direct computation. Similarly one can prove that the coefficient of  $w^{5}$ also vanishes.
 \begin{equation}
\sum \limits_{\sigma  \in \mathfrak{S}_3 }\sigma.
\big(-2(z_{1}-z_{3})(q^{2}-q^{-2})\big)\mathop \prod \limits_{i < j} (z_i - z_j )=0.
\end{equation}
The other coefficients of $w^{i}$, $2\leqslant i\leqslant 4$ are checked in the same way. Thus Lemma \ref{eq:combin} is proved.
\end{proof}
\end{lem}

Similarly, one can prove the Serre relations for the $F_{i}(z)^{\prime}s$.
\end{proof}

\noindent{\bbb{References}}
\begin{enumerate}
{\footnotesize \bibitem{F}
Frenkel I B. Representation of Kac-Moody algebras and dual resonance models. In: Applications of group theory in physics and mathematical physics (Chicago, 1982),  Lect Appl Math. Providence: Amer Math Soc, 1985, 325-353\\[-6.5mm]

\bibitem{FJ}
Frenkel I B, Jing N. Vertex representations of quantum affine algebras. Proc Natl Acad Sci USA, 1988, 85: 9373--9377\\[-6.5mm]

\bibitem{FJW}
Frenkel I B, Jing N, Wang W. Quantum vertex representations via finite groups and the Mckay correspondence. Commun Math Phys, 2000, 211: 365--393\\[-6.5mm]

\bibitem{GJ}
Gao Y, Jing N. $U_{q}(\widehat{gl}_{N})$ action on $\widehat{gl}_{N}$-modules and
  quantum toroidal algebras. J Algebra, 2004, 273: 320--343\\[-6.5mm]

\bibitem{GJ2}
Gao Y , Jing N. A quantized Tits-Kantor-Koecher algebra. Algebra Represent Theory, 2010, 2: 207--217\\[-6.5mm]

\bibitem{GKV}
Ginzburg V, Kapranov M, Vasserot E. Langlands reciprocity for algebraic surfaces. Math Res Lett, 1995,
2: 147--160\\[-6.5mm]

\bibitem{J1}
Jing N. Twisted vertex representations of quantum affine algebras. Invent Math, 1990, 102: 663--690\\[-6.5mm]

\bibitem{J3}
Jing N. Quantum Kac-Moody algebras and vertex representations, Lett Math Phys. 1998, 44: 261--271\\[-6.5mm]

\bibitem{J2}
Jing N. New twisted quantum current algebras. In: Wang J, Lin Z, eds. Representations and Quantizations. Beijing: Higher Education Press and Springer, 2000\\[-6.5mm]

\bibitem{CGJT}
Chen F,  Gao Y, Jing N, Tan S. Twisted vertex operators and unitary Lie algebras, 2011, preprint\\[-6.5mm]

\bibitem{MRY}
Moody R V, Rao S E, Yokonuma T. Toroidal Lie algebras and vertex representations. Geom Dedicata, 1990, 35: 283--307\\[-6.5mm]

\bibitem{S}
Saito Y. Quantum toroidal algebras and their vertex representations. Publ RIMS Kyoto Univ, 1998, 34: 155--177\\[-6.5mm]

\bibitem{STU}
Saito Y, Takemura K, Uglov D. Toroidal actions on level-1 modules of $U_{q}(\widehat{sl_{n}})$. Transform Groups, 1998, 3: 75--102\\[-6.5mm]

\bibitem{TU}
Takemura K, Uglov D. Representations of the quantum toroidal algebra on highest weight modules of the quantum affine algebra of type $gl_{N}$. Publ RIMS Kyoto Univ, 1999, 35: 407--450\\[-6.5mm]

\bibitem{VV}
 Varagnolo M, Vasserot E. Schur duality in the toroidal setting. Comm Math Phys, 1996, 182: 469--484\\[-6.5mm]

}
\end{enumerate}
\end{document}